\newtheorem{lemma}{\bf Lemma}[section]
\newtheorem{theorem}{\bf Theorem}[section]
\numberwithin{equation}{section} \theoremstyle{plain}
\theoremstyle{definition}
\DeclareMathOperator*{\argmax}{argmax}
\begin{document}

\title[] {Large Deviations  Application to Billingsley's Example}
\author{R. Liptser}
\address{Department of Electrical Engineering Systems,
Tel Aviv University, 69978 Tel Aviv, Israel}
\email{liptser@eng.tau.ac.il; rliptser@gmail.com}

\keywords{Empirical distribution; LDP; Stopping time. }
\subjclass{60F10, 60J27}
%
\maketitle
\begin{abstract}
We consider a classical model related to an empirical distribution
function $ F_n(t)=\frac{1}{n}\sum_{k=1}^nI_{\{\xi_k\le t\}}$ of
$(\xi_k)_{i\ge 1}$ -- i.i.d. sequence of random variables, supported
on the interval $[0,1]$, with continuous distribution function
$F(t)=\mathsf{P}(\xi_1\le t)$. Applying ``Stopping Time
Techniques'', we give a proof of Kolmogorov's exponential bound
$$
\mathsf{P}\big(\sup_{t\in[0,1]}|F_n(t)-F(t)|\ge \varepsilon\big)\le
\text{const.}e^{-n\delta_\varepsilon}
$$
conjectured by Kolmogorov in
1943. Using this bound we establish a best possible logarithmic
asymptotic of
$$
\mathsf{P}\big(\sup_{t\in[0,1]}n^\alpha|F_n(t)-F(t)|\ge
\varepsilon\big)
$$
with rate $ \frac{1}{n^{1-2\alpha}}
$
slower than
$\frac{1}{n}$ for any $\alpha\in\big(0,\frac{1}{2}\big)$.
\end{abstract}

\section{\bf Introduction}
\label{sec-1}

Let $(\xi_k)_{i\ge 1}$ be the i.i.d. sequence of random variables with values in
the interval $[0,1]$ having a continuous distribution function $F(t)=\mathsf{P}(\xi_1\le t)$.
Consider an empirical distribution $
F_n(t)=\frac{1}{n}\sum\limits_{k=1}^nI_{\{\xi_k\le t\}}. $ A strong
law of  large numbers for sums of i.i.d. random variables guaranties
that for any $t\in[0,1]$, $ F_n(t)\xrightarrow[n\to \infty]{\rm
a.s.}F(t) $ and the Glivenko-Cantelli theorem also guarantees a uniform
convergence $
\sup_{t\in[0,1]}|F_n(t)-F(t)|\xrightarrow[n\to\infty]{\rm a.s.}0. $

For any fixed $t$, the rate of convergence, in $n\to\infty$, of
$|F_n(t)-F(t)|$ is also well known from the Central Limit Theorem (CLT):
$\{\frac{1}{\sqrt{n}}[F_n(t)-F(t)]\}_{n\to \infty}$ converges in law
to a zero mean Gaussian random variable with the variance $
F(t)[1-F(t)] $.

From Theorem 16.4 of Billingsley (1968), it is known that the family
$\big\{\big(\frac{1}{\sqrt{n}}[F_n(t)-F(t)]\big)_{t\in
[0,1]}\big\}_{n\to \infty}$ converges in law (in Skorokhod's and
uniform metrics) to a zero mean Markov-Gaussian process
$X=(X_t)_{t\in[0,1]}$ with a correlation function
\begin{equation}\label{KTS}
K(t,s)=F(s\wedge t)[1-F(s\vee t)].
\end{equation}
For $F(t)\equiv t$, the limit random process $X$ is known as
``Brownian Bridge'' defined as the unique solution of It\^o's
equation $ X_t=-\int_0^t\frac{X_s}{1-s}ds+B_t $ relative to Brownian
motion $B_t$. In the general case, $F(t)\not\equiv t$, the random
process $X$ can be also defined as the unique solution of It\^o's
equation
\begin{equation}\label{eqXt}
X_t=-\int_0^t\frac{X_s}{1-F(s)}dF(s)+\mathbf{M}_t
\end{equation}
with Brownian motion $B_t$ replaced by a Gaussian martingale
$\mathbf{M}_t$, $ \mathsf{E} \mathbf{M}^2_t\equiv F(t) $ (see
Section \ref{sec-2.1}).

Once, Prof. A.N. Shiryaev has mentioned to participants of the Probability Seminar at the Steklov Mathematical Institute that in 1943 Kolmogorov conjectured the following rate of convergence in the uniform metric,
\begin{equation}\label{eq:KOL}
\mathsf{P}\bigg(\sup_{t\in[0,1]}|F_n(t)-F(t)|\ge
\varepsilon\bigg)\le \text{const.}e^{-n\delta_\varepsilon} ,
\end{equation}
a proof of which has never been published.

In this paper, we give a version of Kolmogorov's
exponential bound  with
\begin{equation*}
\delta_\varepsilon=\frac{\varepsilon}{8}\Big\{\log\Big(1+\frac{\varepsilon^2}{32}\Big)-1\Big\}
+\frac{4}{\varepsilon}\log\Big(1+\frac{\varepsilon^2}{32}\Big).
\end{equation*}
It should be noted that neither Sanov's theorem (1961), \cite{San} (see also Dembo Zeitouni,
\cite{DZ}) nor Wu's result (1994), \cite{Wu}, are not relevant
tools for obtaining the Kolmogorov  bound \eqref{eq:KOL}, since the
Levy-Prohorov  metric is involved in Sanov (1961) and Wu (1994). A
crucial role in proving of \eqref{eq:KOL} plays ``Stopping Time
Techniques''.

Unfortunately, we
could not claim that \eqref{eq:KOL} is best
possible bound even in a logarithmic scale. However, the Kolmogorov bound
helps us to establish the following logarithmic asymptotics: for any $\alpha\in\big(0,\frac{1}{2}\big)$ and any $T$ in a small vicinity of
$\{1\}$,
\begin{gather}\label{eq:1.5}
\lim_{n\to\infty}\frac{1}{n^{1-2\alpha}}\log\mathsf{P}\bigg(\sup_{t\in[0,T]}n^\alpha|F_n(t)-F(t)|
\ge \varepsilon\bigg) =-2\varepsilon^2.
\end{gather}
We build the proof of \eqref{eq:1.5} based on Kolmogorov's bound and on a non-standard Large Deviations technique. A key for \eqref{eq:1.5} consists in choosing  the rate
$\frac{1}{n^{1-\alpha}}$ slower than $\frac{1}{n}$.

The paper is organized as follows. Section \ref{sec-2} contains
auxiliary results from Stochastic Calculus useful for the asymptotic analysis of  the random
process $\big(F_n(t)-F(t)\big)_{t\in[0,T]}$ as $n\to\infty$. Proofs of
\eqref{eq:KOL} and \eqref{eq:1.5} are given in Sections \ref{sec-4a}
and \ref{sec-5} respectively. The Large Deviations Principle  result is formulated and
proved in Section \ref{sec-3} (Appendix).

\section{\bf Stochastic calculus applicability to Billingsley's theorem }
\label{sec-2}

\subsection{$\pmb{X_t}$ as a Solution of (\ref{eqXt})}
\label{sec-2.1}

The limit random processes $X=(X_t)_{t\in[0,1]}$ is zero mean
Gaussian with the correlation function defined in \eqref{KTS}. By
Theorem 8.1 of Doob (1953), the gaussianity of $X$ jointly with an obvious
property of the correlation function,
$$
K(t,s)=\frac{K(t,u)K(u,s)}{K(u,u)},
$$
 enable us to claim
that $X$ is Markov process with respect to a minimal filtration
$(\mathscr{F}^X_t)_{t\in[0,1]}$ generated by $X$. Then for
$s<u<t$,
$$
\mathsf{E}\Big(\frac{X_t}{1-F(t)}\Big|\mathscr{F}^X_u\Big)=\mathsf{E}\Big(\frac{X_t}{1-F(t)}\Big|X_u\Big)
=\frac{1}{1-F(t)}\frac{K(t,u)}{K(u,u)}X_u= \frac{X_u}{1-F(u)}.
$$
In other words, the Gaussian random process $N_t=\frac{X_t}{1-F(t)}$
is the square integrable martingale, i.e., a process with orthogonal
increments (so, with independent increments too). Hence, its
predictable variation process $\langle N\rangle_t$ coincides with $
\mathsf{E} N^2_t=\frac{K(t,t)}{[1-F(t)]^2}=\frac{F(t)}{1-F(t)}. $

Therefore, the process $ \mathbf{M}_t=\int_0^t[1-F(s)]dN_s $ is the
Gaussian martingale with
$$
\langle M\rangle_t=\int_0^t[1-F(s)]^2d\langle
N\rangle_s=\int_0^t[1-F(s)]^2d\Big(\frac{F(s)}{1-F(s)}\Big) =F(t).
$$
Finally, the It\^o equation \eqref{eqXt} is derived by applying the It\^o formula to
$X_t=[1-F(t)]N_t$.

\subsection{Counting Process $\pmb{\sum\limits_{k=1}^nI_{\{\xi_k\le t\}}}$}
\label{sec-2.2}

Without loss of generality we shall assume that all $\xi_k$'s are
defined on a probability space $ (\varOmega,\mathcal{F},\mathsf{P}).
$ Denote
\begin{itemize}
  \item $\mathscr{F}^k=(\mathscr{F}^k_t)_{0\le t\le 1}$ the
filtration  generated by $I_{\{\xi_k<t\}}$,
\item $\mathscr{F}_t=\bigvee_{k\ge 1}\mathscr{F}^k_t$,
  \item $ \mathcal{F}_=\bigvee_{t\in[0,1]}\mathscr{F}_t$
\end{itemize}
and assume also that the general conditions for these filtrations are
fulfilled.

The random process $I_{\{\xi_k\le t\}}$ has piece-wise constant and
right continuous  paths  with only one jump of the unit size. Thus,
$(I_{\{\xi_k\le t\}}, \mathscr{F}^k_t)_{t\in[0,1]}$ is a counting
process with continuous (!) compensator $(A^k_t)_{t\in [0,1]}$,
\begin{equation*}
A^k_t=\int_0^{t\wedge\xi_k}\frac{dF(s)}{1-F(s)}=\int_0^t\frac{1-I_{\{\xi_k\le
s\}}}{1-F(s)}dF(s)
\end{equation*}
(see, e.g., formula (18.23), Section 18.2 in
\cite{LSII}). Set $M^k_t=I_{\{\xi_k\le
t\}}-A^k_t$. It is well known (see, e.g., Ch. 18 in
\cite{LSII}) that $(M^k_t,
\mathscr{F}^k_t)_{t\in[0,1]}$ is a square integrable martingale with
paths from the Skorokhod space $\mathbb{D}_{[0,1]}$ and its
predictable quadratic variation process $\langle
M^k\rangle_t\equiv A^k_t$. The joint independence of $(\xi_k)_{k\ge 1}$
implies that $ \{(I_{\{\xi_k\le t\}}, \mathscr{F}_t)_{t\in [0,1]}\}_{k\ge
1} $ are counting processes with disjoint jumps. Set $
\mathbf{I}^n_t=\sum\limits_{k=1}^nI_{\{\xi_k\le t\}} $. Then, $
(\mathbf{I}^n_t, \mathscr{F}_t)_{t\in [0,1]} $ is a counting process
with the corresponding compensator,
\begin{equation}\label{eq:AM}
\mathbf{A}^n_t=\sum_{k=1}^nA^k_t=n\int_0^t\frac{1-F_n(s)}{1-F(s)}dF(s)
\end{equation}
or, equivalently, $ \big(\mathbf{I}^n_t-\mathbf{A}^n_t,
\mathscr{F}_t\big)_{t\in[0,1]} $ is the square integrable martingale
with the predictable variation process $ \mathbf{A}^n_t. $ Two other
martingales are related to $ \big(\mathbf{I}^n_t-\mathbf{A}^n_t,
\mathscr{F}_t\big)_{t\in[0,1]}: $ $
(\mathbf{M}^n_t,\mathscr{F}_t)_{t\in[0,1]} $ and $
(\mathbf{M}^{n,\alpha}_t,\mathscr{F}_t)_{t\in[0,1]}, $ where
\begin{equation*}
\mathbf{M}^n_t=\frac{1}{\sqrt{n}}\big(\mathbf{I}^n_t-\mathbf{A}^n_t\big)\quad
\text{and}\quad
\mathbf{M}^{n,\alpha}_t=\frac{1}{n^{\frac{1}{2}-\alpha}}\mathbf{M}^{n}_t,
\ \alpha\in \Big[0,\frac{1}{2}\Big),
\end{equation*}
with predictable variation processes respectively:
$$
\langle \mathbf{M}^n\rangle=\frac{1}{n}\mathbf{A}^n_t\quad
\text{and}\quad \langle
\mathbf{M}^{n,\alpha}\rangle_t=\frac{1}{n^{2(1-\alpha)}}\mathbf{A}^n_t.
$$

\subsection{Functional Central Limit Theorem for $\pmb{\mathbf{M}^n_t}$}

\begin{theorem}\label{lem-2.-1}
The family of martingales
$\big\{\big(M^n_t\big)_{t\in[0,1]}\big)\big\}_{n\to\infty}$
converges in law {\rm (}in Skorokhod's and uniform metrics{\rm )}
to a Gaussian martingale  $\mathbf{M}_t$ with $\langle
\mathbf{M}\rangle_t=F(t)$.
\end{theorem}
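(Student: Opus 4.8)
The plan is to apply a martingale functional central limit theorem (Rebolledo's martingale invariance principle) to the sequence $(\mathbf{M}^n_t)_{t\in[0,1]}$. By Section~\ref{sec-2.2}, each $\mathbf{M}^n$ is a square-integrable martingale, null at $0$, with c\`adl\`ag piecewise-constant paths and predictable quadratic variation
\[
\langle\mathbf{M}^n\rangle_t=\frac1n\,\mathbf{A}^n_t=\int_0^t\frac{1-F_n(s)}{1-F(s)}\,dF(s).
\]
The two hypotheses of the invariance principle are: (i) $\langle\mathbf{M}^n\rangle_t$ converges in probability, for each $t$, to the deterministic continuous increasing limit $F(t)$; and (ii) a Lindeberg-type negligibility of the jumps. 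Once these are verified, the invariance principle simultaneously delivers tightness (via the Aldous/Rebolledo criterion) and convergence in law, in the Skorokhod topology, to a continuous Gaussian martingale $\mathbf{M}$ with $\langle\mathbf{M}\rangle_t=F(t)$; since the limit has continuous paths, Skorokhod convergence to it automatically upgrades to convergence in the uniform metric, which is why the theorem can assert both.

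To check (i) I would exploit the continuity of $F$: writing $I_{\{\xi_k\le s\}}=I_{\{F(\xi_k)\le F(s)\}}$ shows $A^k_s=\psi(s)\wedge\eta_k$, where $\psi(s):=-\log(1-F(s))$ and $\eta_k:=-\log(1-F(\xi_k))$ are i.i.d.\ standard exponential, so $\langle\mathbf{M}^n\rangle_t=\frac1n\sum_{k=1}^n\bigl(\psi(t)\wedge\eta_k\bigr)$. The law of large numbers gives pointwise convergence to $\mathsf{E}\bigl(\psi(t)\wedge\eta_1\bigr)=1-e^{-\psi(t)}=F(t)$; since $t\mapsto\langle\mathbf{M}^n\rangle_t$ is nondecreasing and the limit $F$ is continuous and nondecreasing, a P\'olya/Dini argument promotes this to uniform convergence on $[0,1]$ in probability. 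Equivalently, $\langle\mathbf{M}^n\rangle_t-F(t)=\int_0^t\frac{F(s)-F_n(s)}{1-F(s)}\,dF(s)$, which one bounds by the Glivenko--Cantelli theorem on $[0,T]$ for $T<1$ together with $\langle\mathbf{M}^n\rangle_1=\frac1n\sum_{k=1}^n\eta_k\to1$ a.s.

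Condition (ii) is immediate: $\mathbf{A}^n$ is continuous, so $\Delta\mathbf{M}^n_t=n^{-1/2}\Delta\mathbf{I}^n_t\in\{0,n^{-1/2}\}$ and $\sup_t|\Delta\mathbf{M}^n_t|\le n^{-1/2}\to0$; hence for every $\varepsilon>0$ and all $n>\varepsilon^{-2}$ the Lindeberg sums $\sum_{s\le t}(\Delta\mathbf{M}^n_s)^2I_{\{|\Delta\mathbf{M}^n_s|>\varepsilon\}}$ — and their predictable projections — vanish identically. I do not expect a genuine obstacle; the only delicate point is the uniform control of $\langle\mathbf{M}^n\rangle$ up to the endpoint $t=1$, where the weight $1/(1-F(s))$ is unbounded, and this is exactly what the exponential time-change representation above takes care of. (An equivalent alternative is to observe that $F_n(t)=\widetilde F_n(F(t))$ and $\mathbf{M}^n_t=\widetilde{\mathbf{M}}^n(F(t))$ for the corresponding uniform empirical objects, reduce to the classical uniform-empirical functional CLT, and then compose with the continuous map $F$.)
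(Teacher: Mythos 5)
Your proposal is correct and follows essentially the same route as the paper: both invoke a martingale invariance principle (you cite Rebolledo, the paper cites Theorem~2, Ch.~7, \S 1 of Liptser--Shiryaev (1989)) and reduce the problem to showing $\langle\mathbf{M}^n\rangle_t\to F(t)$, which both arguments obtain from the law of large numbers applied to the i.i.d.\ summands $A^k_t$. Your treatment is somewhat more careful than the paper's — you explicitly verify the negligibility of the jumps (trivial here since $|\Delta\mathbf{M}^n_t|\le n^{-1/2}$) and address the integrability of $1/(1-F(s))$ near $t=1$ via the exponential representation $A^k_t=\psi(t)\wedge\eta_k$, points the paper passes over in silence.
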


\begin{proof}
In view of the function $F(t)$ is continuous, the Gaussian martingale $\mathbf{M}_t$ is
continuous too. Then, by Theorem 2, Ch. 7, \S 1  of Liptser-Shiryaev
(1989), \cite{LSMar}, the desired statement holds true provided that $
\big\langle \mathbf{M}^n\big\rangle_t \xrightarrow[n\to\infty]{\rm
prob.}F(t), \ \forall \ t\in[0,1]. $ The latter holds since
$$
\big\langle \mathbf{M}^n\big\rangle_t=\frac{1}{n}\mathbf{A}^n_t
=\displaystyle{\frac{1}{n}\sum_{k=1}^n\int_0^t}\frac{1-I_{\{\xi_k\le
s\}}}{1-F(s)}dF(s)
$$
and, in the case under consideration, the strong law of large numbers for sums of i.i.d. random
variables implies
\begin{gather*}
\lim_{n\to\infty}\Big\langle \mathbf{M}^n\Big\rangle_t=
\mathsf{E}\int_0^t\frac{1-I_{\{\xi_1\le s\}}}{1-F(s)}dF(s) =F(t) \
\text{a.s.} \ \forall \ t\in[0,1].
\end{gather*}
\end{proof}

\subsection{Semimartingale Decomposition of Centered Empirical Distribution }

Set
\begin{equation}\label{eq:2.2a}
X^{n,\alpha}_t=n^\alpha\big[F_n(t)-F(t)\big], \
\alpha\in\Big[0,\frac{1}{2}\Big].
\end{equation}

\begin{lemma}\label{lem-2.1}
For \ $t\in[0,1)$,
\begin{itemize}
  \item[{\bf (i)}]
$
X^{n,\alpha}_t=-\displaystyle{\int_0^t}\frac{X^{n,\alpha}_s}{1-F(s)}dF(s)+
\frac{1}{n^{\frac{1}{2}-\alpha}}\mathbf{M}^{n}_t; $
\item[{\bf (ii)}]
  $
X^{n,\alpha}_t=\frac{1}{n^{\frac{1}{2}-\alpha}}[1-F(t)]\displaystyle{\int_0^t}\frac{d\mathbf{M}^{n}_s}{1-F(s)};
$
\item[{\bf (iii)}]
$
X^{n,\alpha}_t=\frac{1}{n^{\frac{1}{2}-\alpha}}\Big\{\mathbf{M}^{n}_t-[1-F(t)]\displaystyle{\int_0^t}
\frac{\mathbf{M}^{n}_s}{[1-F(s)]^2}dF(s)\Big\}; $
\item[{\bf (iv)}] $X^{n,\alpha}_t=\mathsf{\Psi}\Big(\frac{1}{n^{\frac{1}{2}-\alpha}}\mathbf{M}^{n}_{[0,t]}\Big)$,
where for any function $(x_t)_{t\in[0,1]}$ from the Skorokhod space
$\mathbb{D}_{[0,1]}$,
\begin{equation*}
\mathsf{\Psi}\big(x_{[0,t]}\big)=x_t-[1-F(t)]\int_0^t\frac{x_s}{[1-F(s)]^2}dF(s)
\end{equation*}
is continuous function in the uniform metric on $[0,1]$.
\end{itemize}
\end{lemma}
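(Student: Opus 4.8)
The plan is to establish (i) first directly from the definitions, then obtain (ii) by solving the linear equation in (i) explicitly, and finally read off (iii) and (iv) as algebraic rewritings of (ii). For (i): by definition $X^{n,\alpha}_t = n^\alpha[F_n(t)-F(t)] = \frac{n^\alpha}{n}(\mathbf{I}^n_t - nF(t)) = \frac{1}{n^{1/2-\alpha}}\mathbf{M}^n_t + \frac{n^\alpha}{n}(\mathbf{A}^n_t - nF(t))$, using $\mathbf{I}^n_t = \mathbf{M}^n_t\sqrt{n} + \mathbf{A}^n_t$. Now substitute the explicit compensator \eqref{eq:AM}, $\mathbf{A}^n_t = n\int_0^t \frac{1-F_n(s)}{1-F(s)}dF(s)$, and write $nF(t) = n\int_0^t dF(s) = n\int_0^t \frac{1-F(s)}{1-F(s)}dF(s)$. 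Subtracting, the integrand becomes $\frac{F(s)-F_n(s)}{1-F(s)}$, so $\frac{n^\alpha}{n}(\mathbf{A}^n_t - nF(t)) = -\int_0^t \frac{n^\alpha[F_n(s)-F(s)]}{1-F(s)}dF(s) = -\int_0^t \frac{X^{n,\alpha}_s}{1-F(s)}dF(s)$, which is exactly (i).

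For (ii): equation (i) is a pathwise linear integral equation of the form $X_t = -\int_0^t \frac{X_s}{1-F(s)}dF(s) + Y_t$ with $Y_t = \frac{1}{n^{1/2-\alpha}}\mathbf{M}^n_t$. This is solved by variation of constants exactly as in Section \ref{sec-2.1}: the homogeneous equation has solution proportional to $1-F(t)$, so one checks that $N_t := \frac{X^{n,\alpha}_t}{1-F(t)}$ satisfies $dN_t = \frac{1}{1-F(t)}dY_t$ (applying the product/integration-by-parts rule for the finite-variation-times-process setting, legitimate here since $F$ is continuous and $\mathbf{M}^n$ is a semimartingale), whence $N_t = \int_0^t \frac{dY_s}{1-F(s)}$ and multiplying back by $1-F(t)$ gives (ii). To get (iii) from (ii), apply integration by parts to $\int_0^t \frac{d\mathbf{M}^n_s}{1-F(s)}$: since $F$ is continuous, $d\big(\frac{1}{1-F(s)}\big) = \frac{dF(s)}{[1-F(s)]^2}$, so $\int_0^t \frac{d\mathbf{M}^n_s}{1-F(s)} = \frac{\mathbf{M}^n_t}{1-F(t)} - \int_0^t \mathbf{M}^n_s \frac{dF(s)}{[1-F(s)]^2}$; multiplying by $\frac{1}{n^{1/2-\alpha}}[1-F(t)]$ yields (iii). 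Statement (iv) is then just (iii) rewritten: setting $x_t = \frac{1}{n^{1/2-\alpha}}\mathbf{M}^n_t$ the bracketed expression in (iii) is precisely $\mathsf{\Psi}(x_{[0,t]})$.

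The one genuine point requiring care is the continuity claim for $\mathsf{\Psi}$ on $\mathbb{D}_{[0,1]}$ in the uniform metric. Here I would argue: if $\|x-y\|_{\infty} \le \eta$ on $[0,t]$, then $|\mathsf{\Psi}(x_{[0,t]}) - \mathsf{\Psi}(y_{[0,t]})| \le \eta + [1-F(t)]\int_0^t \frac{\eta}{[1-F(s)]^2}dF(s) = \eta\big(1 + [1-F(t)]\cdot\frac{F(t)}{1-F(t)}\big) = \eta(1+F(t)) \le 2\eta$, where the integral is computed via $\int_0^t \frac{dF(s)}{[1-F(s)]^2} = \frac{F(t)}{1-F(t)}$ (valid since $F$ is continuous and $t<1$). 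Thus $\mathsf{\Psi}$ is in fact Lipschitz with constant $2$ uniformly on $[0,1)$, which is the needed continuity; this is the step where the restriction $t\in[0,1)$ and the continuity of $F$ are essential, since $1-F(t)$ must stay bounded away from $0$ for the bound to be finite. Everything else is bookkeeping with the explicit compensator \eqref{eq:AM} and the elementary stochastic calculus already invoked in Section \ref{sec-2.1}.
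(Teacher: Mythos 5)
Your proposal is correct and follows essentially the same route as the paper: (i) by direct computation with the explicit compensator $\mathbf{A}^n_t$, (ii) as the variation-of-constants solution of the linear equation in (i), (iii) via integration by parts on $\int_0^t\frac{d\mathbf{M}^n_s}{1-F(s)}$, and (iv) as a rewriting of (iii) together with the Lipschitz bound $1+F(t)\le 2$. You in fact supply slightly more detail than the paper at the two places it is terse (the solution formula in (ii) and the explicit constant in the continuity estimate), and your argument is sound.
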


\begin{proof}
{\bf (i)} From \eqref{eq:2.2a} and the definition of
$\mathbf{A}^n_t$ and $\mathbf{M}^n_t$, it follows that $
X^{n,\alpha}_t=\frac{1}{n^{1-\alpha}}\sum\limits_{k=1}^n[A^k_t-F(t)]+\frac{1}{n^{\frac{1}{2}-\alpha}}
\mathbf{M}^{n}_t. $ Consequently,
$$
\begin{aligned}
&
\frac{1}{n^{1-\alpha}}\sum_{k=1}^n[A^k_t-F(t)]
\\
&
=\frac{1}{n^{1-\alpha}}\sum_{k=1}^n
\bigg[\int_0^{t\wedge\xi_k}\frac{dF(s)}{1-F(s)}-F(t) \bigg]
\\
&=\frac{1}{n^{1-\alpha}}\sum_{k=1}^n\bigg[\int_0^t\frac{1-I_{\{\xi_k\le
s\}}}{1-F(s)}dF(s) -F(t)\bigg]
\\
&=-\int_0^t\frac{n^\alpha[F_n(s)-F(s)]}{1-F(s)}dF(s)
=-\int_0^t\frac{X^{n,\alpha}_s}{1-F(s)}dF(s).
\end{aligned}
$$

\smallskip
{\bf (ii)} This formula describes the unique solution of It\^o's
equation from {\bf (ii)}

\smallskip
{\bf (iii)} The It\^o formula $
\frac{\mathbf{M}^{n}_t}{1-F(t)}=\int_0^t\frac{d\mathbf{M}^{n}_s}{1-F(s)}
+\int_0^t\frac{\mathbf{M}^{n}_s}{[1-F(s)]^2}dF(s) $ and {\bf (ii)}
provide
\begin{align*}
\frac{1}{n^{\frac{1}{2}-\alpha}}\mathbf{M}^{n}_t&=\frac{1}{n^{\frac{1}{2}-\alpha}}[1-F(t)]
\bigg\{\frac{\mathbf{M}^{n}_t}{1-F(t)}\bigg\}
\\
&=\frac{1}{n^{\frac{1}{2}-\alpha}}[1-F(t)]\bigg\{\int_0^t\frac{d\mathbf{M}^{n}_s}{1-F(s)}
+\int_0^t\frac{\mathbf{M}^{n}_s}{[1-F(s)]^2}dF(s)\bigg\}
\\
&=X^{n,\alpha}_t+\frac{1}{n^{\frac{1}{2}-\alpha}}[1-F(t)]\int_0^t\frac{\mathbf{M}^{n}_s}{[1-F(s)]^2}dF(s).
\end{align*}

\smallskip
{\bf (iv)} $\mathsf{\Psi}(x_{[0,t]})$ is nothing but {\bf (iii)}
with $x_t$ replaced by
$\frac{1}{n^{\frac{1}{2}-\alpha}}\mathbf{M}^{n}_t$. A desired
continuity of $\mathsf{\Psi}$ follows from
$$
\sup_{t\in[0,1]}|x'_t-x''_t|\le \varepsilon \ \Rightarrow \
\sup_{t\in[0,1]}\big|\mathsf{\Psi}\big(x'_{[0,t]}\big)-\mathsf{\Psi}\big(x''_{[0,t]}\big)\big|\le
2\varepsilon.
$$
\end{proof}

\subsubsection{\bf An Alternative Proof
of Billingsley's Theorem}

For $\alpha=\frac{1}{2}$, write
$
X^{n,\frac{1}{2}}_t=\sqrt{n}\big[F_n(t)-F(t)\big].
$
\begin{lemma}\label{lem-2.0}
The family
$\big\{\big(X^{n,\frac{1}{2}}_t)_{t\in[0,1]}\big)\big\}_{n\to\infty}$
converges in law {\rm (}in Skorokhod's and uniform metrics{\rm )}
to the continuous Gaussian process $(X_t)_{t\in[0,1]}$ defined in
\eqref{eqXt}.
\end{lemma}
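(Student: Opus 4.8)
The plan is to deduce the statement from the functional central limit theorem of Theorem~\ref{lem-2.-1} together with the deterministic representation of Lemma~\ref{lem-2.1}{\bf (iv)}. Specialising that representation to $\alpha=\frac12$, so that the prefactor $n^{-(1/2-\alpha)}$ equals $1$, gives
\[
X^{n,\frac12}_t=\mathsf{\Psi}\big(\mathbf{M}^{n}_{[0,t]}\big),\qquad t\in[0,1),
\]
i.e.\ the rescaled empirical process is the image of the single martingale $\mathbf{M}^{n}$ under one fixed, sample-path functional $\mathsf{\Psi}$. The reason to use {\bf (iv)} rather than the equivalent formulas {\bf (ii)}--{\bf (iii)} is that $\mathsf{\Psi}$ involves only an ordinary (pathwise) Lebesgue--Stieltjes integral, so that no stochastic-integration continuity issues arise: by the estimate recorded in {\bf (iv)}, $\mathsf{\Psi}$ maps $\mathbb{D}_{[0,1]}$ into itself and is uniformly continuous in the supremum metric, in particular continuous at every point of the subspace $\mathbb{C}_{[0,1]}$ on which the limiting law is concentrated.

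First I would invoke Theorem~\ref{lem-2.-1}: $\big(\mathbf{M}^{n}_t\big)_{t\in[0,1]}$ converges in law, in the Skorokhod and uniform metrics, to the continuous Gaussian martingale $\mathbf{M}_t$ with $\langle\mathbf{M}\rangle_t=F(t)$. The continuous mapping theorem for weak convergence, applied to the continuous map $\mathsf{\Psi}$, then shows that $X^{n,\frac12}=\mathsf{\Psi}\big(\mathbf{M}^{n}_{[0,\cdot]}\big)$ converges in law to $\mathsf{\Psi}\big(\mathbf{M}_{[0,\cdot]}\big)$, the convergence holding in the uniform metric and, since the limit has continuous paths, also in the Skorokhod metric.

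Next I would identify $\mathsf{\Psi}\big(\mathbf{M}_{[0,\cdot]}\big)$ with the process $X$ of \eqref{eqXt}. By the It\^o formula for $t\mapsto\mathbf{M}_t/(1-F(t))$ — the same manipulation as in the proof of Lemma~\ref{lem-2.1}{\bf (iii)}, with $\mathbf{M}^{n}$ replaced by $\mathbf{M}$ —
\[
\mathsf{\Psi}\big(\mathbf{M}_{[0,t]}\big)=\mathbf{M}_t-[1-F(t)]\int_0^t\frac{\mathbf{M}_s}{[1-F(s)]^2}\,dF(s)=[1-F(t)]\int_0^t\frac{d\mathbf{M}_s}{1-F(s)},
\]
and, as already derived in Section~\ref{sec-2.1}, the right-hand side is precisely the unique solution $X_t$ of the It\^o equation \eqref{eqXt}. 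Hence $\mathsf{\Psi}\big(\mathbf{M}_{[0,\cdot]}\big)=X$, which is the asserted limit.

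The only genuinely delicate point — and the one I would be most careful about — is the behaviour at the right endpoint $t=1$, where $1-F(t)\downarrow0$ and $\int_0^t[1-F(s)]^{-2}\,dF(s)$ diverges, so that it is not obvious a priori that $\mathsf{\Psi}$ is well behaved up to $t=1$. This is, however, exactly what the bound in {\bf (iv)} controls, since $[1-F(t)]\int_0^t[1-F(s)]^{-2}\,dF(s)=F(t)\le1$ for $t<1$; hence $\mathsf{\Psi}$ extends continuously to all of $[0,1]$, with $X^{n,\frac12}_1=0=X_1$ a.s., and the continuous mapping argument remains valid on the whole interval. A secondary, purely bookkeeping, point is that weak convergence in $(\mathbb{D}_{[0,1]},\|\cdot\|_\infty)$ is to be read in the sense already fixed by Theorem~\ref{lem-2.-1}, the limit being a tight $\mathbb{C}_{[0,1]}$-valued law, so that the continuous mapping theorem applies without separability concerns.
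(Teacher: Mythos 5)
Your proposal is correct and follows exactly the paper's own route: represent $X^{n,\frac12}_t=\mathsf{\Psi}\big(\mathbf{M}^{n}_{[0,t]}\big)$ via Lemma~\ref{lem-2.1}\textbf{(iv)}, pass to the limit with Theorem~\ref{lem-2.-1} and the continuity of $\mathsf{\Psi}$, and identify $\mathsf{\Psi}\big(\mathbf{M}_{[0,\cdot]}\big)$ with the solution of \eqref{eqXt} by It\^o's formula. Your added remarks on the endpoint $t=1$ (using $[1-F(t)]\int_0^t[1-F(s)]^{-2}dF(s)=F(t)$) and on the correct specialisation of the prefactor to $1$ at $\alpha=\tfrac12$ are sound refinements of the same argument, not a different one.
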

\begin{proof}
By Lemma \ref{lem-2.1}{\bf (iv)}, $
X^{n,\frac{1}{2}}_t=\mathsf{\Psi}\big(\frac{1}{\sqrt{n}}\mathbf{M}^n_{[0,t]}\big)
$ and by Theorem \ref{lem-2.-1},
$$
\big(X^{n,\frac{1}{2}}_t\big)_{t\in[0,1]}\xrightarrow[n\to\infty]{\rm
law} \mathsf{\Psi}\big(\mathbf{M}_{[0,t]}\big)_{t\in[0,1]}.
$$
Now,
by applying the It\^o formula to
$X_t:=\mathsf{\Psi}\big(\mathbf{M}_{[0,t]}\big)$, we make sure that
$X_t$ solves \eqref{eqXt}.
\end{proof}

\section{\bf The Kolmogorov bound}
\label{sec-4a}

In this section, we show that
\begin{gather}\label{eq:KOL+}
\mathsf{P}\bigg(\sup_{t\in[0,1]}|F_n(t)-F(t)|\ge
\varepsilon\bigg)
\nonumber\\
\le
2\exp\Bigg(-n\Big[\frac{\varepsilon}{8}\Big\{\log\Big(1+\frac{\varepsilon^2}{32}\Big)-1\Big\}
+\frac{4}{\varepsilon}\log\Big(1+\frac{\varepsilon^2}{32}\Big)\Big]\bigg).
\end{gather}

\smallskip
\noindent Since $F_n(t)$ and $F(t)$ are increasing functions and $F(t)$ is continuous, the
following upper bound with a free parameter $T\in(0,1)$ holds:
\begin{align*}
\sup_{t\in[0,1]}|F_n(t)-F(t)|&\le
\sup_{t\in[0,T]}|F_n(t)-F(t)|+\sup_{t\in(T,1]}|F_n(t)-F(t)|
\\
& \le \sup_{t\in[0,T]}|F_n(t)-F(t)|+|1-F(T)|+|1-F_n(T)|
\\
& \le \sup_{t\in[0,T]}|F_n(t)-F(t)|+2[1-F(T)]+|F(T)-F_n(T)|
\\
&\le 2\Big\{\sup_{t\in[0,T]}|F_n(t)-F(t)|+[1-F(T)]\Big\}.
\end{align*}
A choice of $T$ with $1-F(T)=\frac{\varepsilon}{4}$ guarantees a
useful upper bound
\begin{gather*}
\mathsf{P}\bigg(\sup_{t\in[0,1]}|F_n(t)-F(t)|\ge \varepsilon\bigg)
\le
\mathsf{P}\bigg(\sup_{t\in[0,T]}|F_n(t)-F(t)|\ge\frac{\varepsilon}{4}\bigg).
\end{gather*}
By Lemma \ref{lem-2.1} ({\bf (iv)}) with $\alpha=0$, we find that
\begin{gather*}
\sup_{t\in[0,T]}|F_n(t)-F(t)|
=\sup_{t\in[0,T]}|X^{n,0}_t|
\\
\le\frac{1}{\sqrt{n}}\sup_{t\in[0,T]}|\mathbf{M}^n_t|\Big(1+\sup_{t\in[0,1]}[1-F(t)]
\int_0^t\frac{dF(s)}{[1-F(s)]^2}\Big)
\\
\le\frac{2}{\sqrt{n}}\sup_{t\in[0,T]}|\mathbf{M}^n_t|
\end{gather*}
and the following upper bound:
$$
\mathsf{P}\big(\sup_{t\in[0,1]}|F_n(t)-F(t)|\ge \varepsilon\big)
\le
\mathsf{P}\big(\sup_{t\in[0,T]}\frac{1}{\sqrt{n}}|\mathbf{M}^n_t|\ge\frac{\varepsilon}{8}\big).
$$

Now, we shall combine ``exponential martingale'' and ``stopping
time'' techniques. With $\lambda>0$, let us introduce the
exponential martingale
\begin{equation}\label{ZZZ}
\mathfrak{z}_t=
\exp\bigg(\frac{\lambda}{\sqrt{n}}\mathbf{M}^n_t-\Big[e^{\frac{\lambda}{n}}-\frac{\lambda}{n}-1\Big]
\mathbf{A}^n_t\bigg)
\end{equation}
relative to the filtration $(\mathscr{F}_t)_{t\in[0,1]}$. It is well
known that any exponential martingale is a supermartingale too, that
is, $(\mathfrak{z}_t,\mathscr{F}_t)_{t\in[0,1]}$ is the nonnegative
supermartingale with $
\mathsf{E}\mathfrak{z}_\tau\le\mathsf{E}\mathfrak{z}_0=1 $ for any
stopping time $\tau$ w.r.t. the filtration
$(\mathscr{F}_t)_{[t\in[0,1]]}$.

We choose two stopping times,
$$
\tau^n_\pm=\inf\Big\{t\le T:\pm
\frac{1}{\sqrt{n}}\mathbf{M}^{n}_t\ge\frac{\varepsilon}{8} \Big\},
\ \ \inf(\varnothing)=\infty,
$$
and use them for obtaining the following bound:
$$
\mathsf{P}\Big(\sup_{t\in[0,T]}\frac{1}{\sqrt{n}}|\mathbf{M}^{n}_t|>
\frac{\varepsilon}{8}
\Big) \le 2\max\Big[\mathsf{P}\Big(\tau^n_+<\infty\Big),
\mathsf{P}\Big(\tau^n_-<\infty\Big)\Big].
$$
In order to find an upper bound of $\mathsf{P}(\tau_+<\infty)$,
write
\begin{align*}
1&\ge \mathsf{E}\mathfrak{z}_{\tau_+}\ge
\mathsf{E}I_{\{\tau_+<\infty\}} \mathfrak{z}_{\tau_+}=
\mathsf{E}I_{\{\tau_+<\infty\}}
\exp\Big(\lambda\frac{1}{\sqrt{n}}\mathbf{M}^{n}_{\tau_+}-\Big[e^{\frac{\lambda}{n}}-1
-\frac{\lambda}{n}\Big]\mathbf{A}^n_{\tau_+}\Big)
\\
&\ge\mathsf{P}(\tau_+<\infty) \exp\Big(\lambda\frac{\varepsilon}{8}
-\Big[e^{\frac{\lambda}{n}}-1-\frac{\lambda}{n}\Big]\mathbf{A}^n_T\Big).
\end{align*}
By \eqref{eq:AM},
$
\mathbf{A}^n_T\le \frac{n}{1-F(T)}=\frac{4n}{\varepsilon},
$
so that
$$
1\ge\mathsf{P}(\tau^n_+<\infty)\exp\Big(\lambda\frac{\varepsilon}{8}
-\Big[e^{\frac{\lambda}{n}}-1-\frac{\lambda}{n}\Big]\frac{4n}{\varepsilon}\Big)
$$
or, equivalently,
$ \mathsf{P}(\tau^n_+<\infty)\le
\exp\big(-\big\{\lambda\frac{\varepsilon}{8}
-\big[e^{\frac{\lambda}{n}}-1-\frac{\lambda}{n}\big]\frac{4n}{\varepsilon}\big\}\big).
$
Since $\lambda$ is an arbitrary positive parameter, we can set $\lambda$ as
$
\lambda^*=\argmax_{\mu>0}
\big\{\mu\frac{\varepsilon}{8}
-\big[e^{\frac{\mu}{n}}-1-\frac{\mu}{n}\big]\frac{4n}{\varepsilon}\big\}
=n\log\big(1+\frac{\varepsilon^2}{32}\big),
$
in order to obtain
\begin{gather*}
\mathsf{P}(\tau^n_+<\infty)
\\
\le
\exp\Big(-\Big\{\lambda^*\frac{\varepsilon}{8}
-\Big[e^{\frac{\lambda^*}{n}}-1-\frac{\lambda^*}{n}\Big]\frac{4n}{\varepsilon}\Big\}\Big)
\\
=\exp\Big(-n\Big[\frac{\varepsilon}{8}\Big\{\log
\Big(1+\frac{\varepsilon^2}{32}\Big)-1\Big\}
+\frac{4}{\varepsilon}\log\Big(1+\frac{\varepsilon^2}{32}\Big)\Big]\Big).
\end{gather*}

\medskip
The proof of the upper bound
$
\mathsf{P}(\tau^n_-<\infty)
\le \exp\big(-n\big[\frac{\varepsilon}{8}\big\{\log
\big(1+\frac{\varepsilon^2}{32}\big)-1\big\}
+\frac{4}{\varepsilon}\log\big(1+\frac{\varepsilon^2}{32}\big)\big]\big)
$
is similar.

\medskip
\noindent
Therefore,
\eqref{eq:KOL+} holds. \qed

\section{\bf The proof of (\ref{eq:1.5})}
\label{sec-5}

Recall that
$
X^{n,\alpha}_t=n^\alpha\big[F_n(t)-F(t)\big]
$
(see \eqref{eq:2.2a}).

\begin{theorem}\label{theo-5.1}
\underline{}For any $\alpha\in\big(0,\frac{1}{2}\big)$ and any $T$ in a small vicinity of $\{1\}$,
\begin{gather*}
\lim_{n\to\infty}\frac{1}{n^{1-2\alpha}}
\log\mathsf{P}\bigg(\sup_{t\in[0,T]}n^\alpha|X^{n,\alpha}_t|\ge
\varepsilon\bigg) =-2\varepsilon^2.
\end{gather*}
\end{theorem}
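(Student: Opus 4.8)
The plan is to derive Theorem~\ref{theo-5.1} from a large deviations principle (LDP), at speed $n^{1-2\alpha}$, for the rescaled martingales $\big(\mathbf{M}^{n,\alpha}_t\big)_{t\in[0,T]}$, and then to evaluate an explicit variational problem. By Lemma~\ref{lem-2.1}{\bf (iv)}, $X^{n,\alpha}_t=\mathsf{\Psi}\big(\mathbf{M}^{n,\alpha}_{[0,t]}\big)$; since $T<1$ the kernel $[1-F(s)]^{-1}$ is bounded on $[0,T]$, so $\mathsf{\Psi}$ is a continuous \emph{linear bijection} of $\mathbb{D}_{[0,T]}$ in the uniform metric, with inverse $\mathsf{\Psi}^{-1}(\psi)_t=\psi_t+\int_0^t\frac{\psi_s}{1-F(s)}\,dF(s)$. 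Hence, once an LDP for $\big(\mathbf{M}^{n,\alpha}_t\big)_{t\in[0,T]}$ at speed $n^{1-2\alpha}$ with rate function $\mathbb{I}$ is in hand, the contraction principle gives an LDP for $\big(X^{n,\alpha}_t\big)_{t\in[0,T]}$ at the same speed with rate function $\mathbb{J}(\psi)=\mathbb{I}\big(\mathsf{\Psi}^{-1}(\psi)\big)$.

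Such an LDP --- the subject of Section~\ref{sec-3} --- is of ``small-noise'' type. The martingale $\mathbf{M}^{n,\alpha}_t=n^{-(1-\alpha)}\big(\mathbf{I}^n_t-\mathbf{A}^n_t\big)$ has jumps of size $n^{-(1-\alpha)}$, so on the exponential scale $n^{1-2\alpha}$ the jumps are of size $n^{-\alpha}\to 0$ and are asymptotically negligible; this is precisely why a speed slower than $n$ must be chosen. Moreover, by \eqref{eq:AM}, $n^{1-2\alpha}\langle\mathbf{M}^{n,\alpha}\rangle_t=\int_0^t\frac{1-F_n(s)}{1-F(s)}\,dF(s)$, and the Kolmogorov bound \eqref{eq:KOL+} shows that, outside an event of probability at most $\text{const.}\,e^{-n\delta_\eta}$ --- super-exponentially small relative to $e^{-cn^{1-2\alpha}}$ because $n\gg n^{1-2\alpha}$ --- this expression stays within $O(\eta)$ of $F(t)$, uniformly in $t\in[0,T]$. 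These two facts reduce the analysis to that of the Gaussian martingale $\mathbf{M}$ with $\langle\mathbf{M}\rangle_t=F(t)$ (cf. Theorem~\ref{lem-2.-1}) scaled by $n^{-(1/2-\alpha)}$, so that
$$
\mathbb{I}(\varphi)=\frac{1}{2}\int_0^T h_s^2\,dF(s)
$$
whenever $\varphi_t=\int_0^t h_s\,dF(s)$ on $[0,T]$ for some $h\in L^2\big([0,T],dF\big)$, and $\mathbb{I}(\varphi)=+\infty$ for all other $\varphi$. Exponential tightness in the uniform metric is obtained from the same exponential-martingale/stopping-time device as in Section~\ref{sec-4a}, now applied to the increments of $\mathbf{M}^{n,\alpha}$ over a fine partition of $[0,T]$.

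Granting the LDP for $\big(X^{n,\alpha}_t\big)_{t\in[0,T]}$, apply it to $A_\varepsilon=\{\psi\in\mathbb{D}_{[0,T]}:\sup_{t\in[0,T]}|\psi_t|\ge\varepsilon\}$. This set is closed and satisfies $\overline{(A_\varepsilon)^{\circ}}=A_\varepsilon$ (if $\sup_t|\psi_t|=\varepsilon$, then $(1+k^{-1})\psi\in(A_\varepsilon)^{\circ}$ and $(1+k^{-1})\psi\to\psi$), so the LDP upper and lower bounds coincide and
$$
\lim_{n\to\infty}\frac{1}{n^{1-2\alpha}}\log\mathsf{P}\Big(\sup_{t\in[0,T]}|X^{n,\alpha}_t|\ge\varepsilon\Big)=-\inf_{\psi\in A_\varepsilon}\mathbb{J}(\psi).
$$
Here $\mathbb{J}$ is the Cameron--Martin rate function of the centred Gaussian process $X$ of \eqref{eqXt}, whose reproducing kernel is $K(t,s)=F(s\wedge t)[1-F(s\vee t)]$. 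Since $\mathbb{J}(c\psi)=c^2\mathbb{J}(\psi)$ for $c\ge 0$ and $\mathbb{J}(-\psi)=\mathbb{J}(\psi)$, one has $\inf_{\psi\in A_\varepsilon}\mathbb{J}(\psi)=\inf_{t_0\in[0,T]}\inf\{\mathbb{J}(\psi):\psi_{t_0}=\varepsilon\}$, and for a reproducing-kernel Hilbert space the one-point constraint is solved by $\psi=\frac{\varepsilon}{K(t_0,t_0)}K(\cdot,t_0)$, whence $\inf\{\mathbb{J}(\psi):\psi_{t_0}=\varepsilon\}=\frac{\varepsilon^2}{2K(t_0,t_0)}=\frac{\varepsilon^2}{2F(t_0)[1-F(t_0)]}$. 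Consequently
$$
\inf_{\psi\in A_\varepsilon}\mathbb{J}(\psi)=\frac{\varepsilon^2}{2\sup_{t_0\in[0,T]}F(t_0)[1-F(t_0)]},
$$
and since $F$ is continuous with $F(1)=1$, for $T$ in a left neighbourhood of $1$ --- precisely, $F(T)\ge\tfrac12$ --- the supremum equals $\tfrac14$, which yields the asserted value $-2\varepsilon^2$. (Since $|X^{n,\alpha}_t|=n^\alpha|F_n(t)-F(t)|$, this is exactly \eqref{eq:1.5}.)

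The main obstacle is the LDP for $\mathbf{M}^{n,\alpha}$ itself: establishing a Gaussian-type rate function for a \emph{pure-jump} martingale carrying a \emph{random} predictable variation, at the non-classical speed $n^{1-2\alpha}$, together with exponential tightness in the uniform metric --- and it is exactly here that the Kolmogorov bound of Section~\ref{sec-4a} is indispensable. Once that LDP (Section~\ref{sec-3}) is available, the transfer through $\mathsf{\Psi}$ by the contraction principle and the reproducing-kernel computation above are routine.
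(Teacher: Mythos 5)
Your proposal is correct in substance and follows the same overall architecture as the paper -- the functional LDP of Theorem \ref{theo-4.2x} for $(X^{n,\alpha}_t)_{t\in[0,T]}$ at speed $\frac{1}{n^{1-2\alpha}}$, obtained by transferring an LDP for the rescaled martingale through the continuous map $\mathsf{\Psi}$, followed by a variational computation giving the constant $2\varepsilon^2$ -- but it diverges from the paper at the lower bound, and the divergence is instructive. The paper encodes the event as the closed set $\mathsf{C}$ of \emph{stopped} paths, which has empty interior, so the LDP lower bound is vacuous there; it therefore proves the lower bound separately, by bounding the probability below by $\mathsf{P}\big(|X^{n,\alpha}_{T^*}|\ge\varepsilon\big)$ at the single point $T^*$ with $F(T^*)=\tfrac12$ and invoking the classical moderate-deviations (Cram\'er-type) result for the i.i.d.\ sum $\frac{1}{n^{1-\alpha}}\sum_k[I_{\{\xi_k\le T^*\}}-F(T^*)]$, whose scalar rate function $2v^2$ gives $-2\varepsilon^2$ directly. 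You instead work with the natural closed set $A_\varepsilon=\{\sup_t|\psi_t|\ge\varepsilon\}$, whose interior is $\{\sup_t|\psi_t|>\varepsilon\}$, and close the gap between $\inf_{A_\varepsilon}\mathbb{J}$ and $\inf_{A_\varepsilon^{\circ}}\mathbb{J}$ by the dilation $\psi\mapsto(1+k^{-1})\psi$ together with the quadratic homogeneity of $\mathbb{J}$; this is correct and lets you extract both bounds from the functional LDP alone, at the price of actually needing its lower-bound half, which the paper's route avoids. Your evaluation of the infimum via the reproducing kernel $K(t,s)=F(s\wedge t)[1-F(s\vee t)]$, giving $\frac{\varepsilon^2}{2K(t_0,t_0)}$ minimized at $F(t_0)=\tfrac12$, is the dual of the paper's Cauchy--Schwarz computation and yields the same extremal path and the same value; the hypothesis $F(T)\ge\tfrac12$ is exactly what the paper's ``$T$ in a small vicinity of $\{1\}$'' is meant to secure. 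Your remarks on why the LDP for $\mathbf{M}^{n,\alpha}$ requires the slower speed and the Kolmogorov bound \eqref{eq:KOL+} match the strategy of Section \ref{sec-3}, which you are entitled to take as given.

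One small point of reconciliation: as literally stated, Theorem \ref{theo-5.1} involves $n^{\alpha}|X^{n,\alpha}_t|=n^{2\alpha}|F_n(t)-F(t)|$, which is inconsistent with \eqref{eq:1.5} and with the application of Theorem \ref{theo-4.2x} to the family $(X^{n,\alpha}_t)$; like the paper's own proof, your argument in fact establishes the statement with $|X^{n,\alpha}_t|=n^{\alpha}|F_n(t)-F(t)|$, i.e.\ \eqref{eq:1.5}, so you have read the (apparent) typo the same way the author's proof does.
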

\begin{proof}
By Theorem \ref{theo-4.2x} (Appendix) the family
$
\{(n^\alpha X^{n,\alpha}_t))_{t\in[0,T]}\}_{n\to \infty}
$
obeys the large deviations principle in the Skorokhod space $\mathbb{D}_{[0,1]}$ relative Skorokhod's
and uniform metrics with
the rate
$\frac{1}{n^{1-2\alpha}}$ and the rate function
\begin{equation*}
J_T(u)=
  \frac{1}{2}\begin{cases}
    \displaystyle{\int_0^T}\Big(\dot{u}_t+\frac{u_t}{1-F(t)}\Big)^2dF(t) , &{\substack{u_0=0\\du_t=
                                                                      \dot{u}_tdF(t)\\
                                                                      \int_0^T(\dot{u}_t+\frac{u_t}{1-F(t)}
                                                                      )^2dF(t)<\infty}}
 \\
    \infty , & \text{otherwise}.
  \end{cases}
\end{equation*}
Since paths of $(X^{n,\alpha})_{t\in[0,T]}$ with property
$
\big\{\sup_{t\in[0,T]}n^\alpha|X^{n,\alpha}_t|\ge
\varepsilon\big\}
$
form  a closed set
$$
\mathsf{C}=\left\{u\in\mathbb{D}_{[0,T]}: \substack{u_0=0
\\ \\
\theta(u)=\inf\{t\le T:|u_t|\ge \varepsilon\}\le T \\ \\ u_t\equiv
0, \ t>\theta(u) } \right\},
$$
in accordance with the large deviations theory,
$$
\varlimsup\limits_{n\to\infty}\frac{1}{n^{1-2\alpha}}\log
\mathsf{P}\big(\sup_{t\in[0,T]}n^\alpha|X^{n,\alpha}_t|
\ge \varepsilon\big) \le -\inf\limits_{u\in \mathsf{C}}J_T(u).
$$
A minimization procedure of $J_T(u)$ in $u\in\mathsf{C}$ automatically excludes from
consideration all functions $(u_t)_{t\in[0,T]}$ with $J_T(u)=\infty$. Consequently,
\begin{multline*}
\varlimsup_{n\to\infty}\frac{1}{n^{1-2\alpha}}\log\mathsf{P}\Big(\sup_{t\in[0,T]}n^\alpha|X^{n,\alpha}_t|
\ge
\varepsilon\Big) \le
\\
-\frac{1}{2}\inf_{u\in \mathsf{C}}\int_0^{T\wedge
\theta(u)}\Big[\dot{u}_s+\frac{u_s}{1-F(s)} \Big]^2dF(s).
\end{multline*}
Denote
$
 w_t=\dot{u}_t+\frac{u_t}{1-F(t)}.
$
Then
$
J_{\theta(u)}(u)=\frac{1}{2}\int_0^{\theta(u)}w^2_tdF(t),
$
and
$$
u_t=-\int_0^t\frac{u_s}{1-F(s)}dF(s)+\int_0^tw_sdF(s), \ t\le \theta(u).
$$
This integral equation obeys the unique solution
$$
u_{t\wedge\theta(u)}=[1-F(t\wedge\theta(u))]\int_0^{t\wedge
\theta(u)}\frac{w_s}{1-F(s)}dF(s).
$$
The assumption $\theta(u)\le T$ implies
$u^2_{\theta(u)}=\varepsilon^2$. Hence
\begin{equation}\label{eq:JJ}
\varepsilon^2=[1-F(\theta(u))]^2\Big(\int_0^{\theta(u)}
\frac{w_s}{1-F(s)}dF(s)\Big)^2.
\end{equation}
Now, we apply the Cauchy-Schwarz's inequality,
\begin{gather}\label{eq:KSW}
\Big(\int_0^{\theta(u)} \frac{w_s}{1-F(s)}dF(s)\Big)^2\le
\int_0^{\theta(u)}\frac{dF(s)}{[1-F(s)]^2}
\int_0^{\theta(u)}w^2_sdF(s)
\\
=\frac{F(\theta(u))}{1-F(\theta(u)} 2J_{\theta(u)}(u),
\nonumber
\end{gather}
transforming \eqref{eq:JJ} into the lower bound:
$
J_{\theta(u)}(u)\ge \frac{\varepsilon^2}{2F(\theta(u))[1-F(\theta(u))]}.
$
Assume for a moment that there exists $u^*_t$ such that $F(\theta(u^*))=\frac{1}{2}$. Then the following
lower bound
$
J_{\theta(u^*)}(u^*)\ge 2\varepsilon^2
$
is valid. This lower bound is attainable,
$
J_{\theta(u^*)}(u^*)= 2\varepsilon^2,
$
provided that the Cauchy-Schwarz's inequality in \eqref{eq:KSW} becomes the equality.
The latter holds true if $w^*_s$, related to $u^*_t (\dot{u}^*_t)$, is in a proportion to $\frac{1}{1-F(s)}$, i.e.
$w^*_s=\frac{l}{1-F(s)}$ and there exists a constant $l^*$ such that
$
\big(\int_0^{\theta(u^*)} \frac{w^*_s}{1-F(s)}dF(s)\big)^2=4 \varepsilon^2.
$
The existence of $l^*=2\varepsilon$
is verified directly.

Thus, the upper bound is valid:
$$
\varlimsup_{n\to\infty}\frac{1}{n^{1-2\alpha}}\log\mathsf{P}\big(\sup_{t\in[0,T]}n^\alpha|X^{n,\alpha}_t|
\ge \varepsilon\big) \le - 2\varepsilon^2.
$$

In order to complete the proof, we have to prove the following lower bound
\begin{gather*}
\varliminf_{n\to\infty}\frac{1}{n^{1-2\alpha}}\log\mathsf{P}\Big(\sup_{t\in[0,T]}n^\alpha|X^{n,\alpha}_t|
\ge \varepsilon\Big) \ge - 2\varepsilon^2
\end{gather*}
Formally, one may apply
\begin{gather*}
\varliminf_{n\to\infty}\frac{1}{n^{1-2\alpha}}\log\mathsf{P}\Big(\sup_{t\in[0,T]}n^\alpha|X^{n,\alpha}_t|
\ge
\varepsilon\Big)
\\
\ge -\frac{1}{2}\inf_{u\in \mathsf{C}^\circ}\int_0^{T\wedge
\theta(u)}\Big[\dot{u}_s+\frac{u_s}{1-F(s)} \Big]^2dF(s) ,
\end{gather*}
where $\mathsf{C}^\circ$ is an interior of $\mathsf{C}$. However,
$\mathsf{C}$ has an empty interior.
Fortunately, the proof of the upper bound gives us a hint: $F(\theta(u^*)=\frac{1}{2}$.
Choose $T^*$ with $F(T^*)=\frac{1}{2}$ and use an obvious inequality:
\begin{gather*}
\mathsf{P}\Big(\sup_{t\in[0,T]}n^\alpha|X^{n,\alpha}_t|
\ge
\varepsilon\Big) \ge \mathsf{P}\Big(n^\alpha|X^{n,\alpha}_{T^*}|
\ge
\varepsilon\Big).
\end{gather*}
Hence, only a lower bound
$
\varliminf\limits_{n\to\infty}\frac{1}{n^{1-2\alpha}}\log \mathsf{P}\big(n^\alpha|X^{n,\alpha}_{T^*}|
\ge
\varepsilon\big)\ge -2\varepsilon^2
$
has to be proven.
The latter is verified with the help of the large deviations principle for the different family
$
\{X^{n,\alpha}_{T^*}\}_{n\to\infty}.
$

Since
$
X^{n,\alpha}_{T^*}=\frac{1}{n^{1/2}-\alpha}\frac{1}{\sqrt{n}}\sum\limits_
{k=1}^n\big[I_{\{\xi_k\le T^*\}}-F(T^*)\big]
$
with
$
(I_{\{\xi_k\le T^*\}}-F(T^*))_{k\ge 1}
$
being the i.i.d. sequence of zero mean random variables having the variance
$
F(T^*)[1-F(T^*)]=\frac{1}{4},
$
the large deviations principle
for this family is well known and has the rate $\frac{1}{n^{1-2\alpha}}$ and the rate function
$
I(v)=\frac{v^2}{2F(T^*)[1-F(T^*)]}=2 v^2.
$
Therefore,
$$
\varliminf_{n\to\infty}\frac{1}{n^{1-2\alpha}}\log \mathsf{P}\Big(n^\alpha|X^{n,\alpha}_{T^*}|
\ge
\varepsilon\Big)= -\inf_{v:|v|\ge \varepsilon}I(v)=-2\varepsilon^2.
$$
\end{proof}

\appendix

\section{\bf Large deviations principle for $\pmb{X^{n,\alpha}}$  }
\label{sec-3}

By \eqref{eq:2.2a},
$
X^{n,\alpha}_t=-\int_0^t\frac{X^{n,\alpha}_s}{1-F(s)}dF(s)+
\frac{1}{n^{\frac{1}{2}-\alpha}}\mathbf{M}^n_t.
$
A complicated structure of the martingale $(\mathbf{M}^n_t,\mathscr{F}_t)_{t\in [0,1]}$
does not allow us to apply Freidlin and Wentzell's (1984), \cite{FW},  or of Wentzell's (1986) \cite{W}
results.

On the other hand, by Theorem \ref{lem-2.-1}, the family
$\{(\mathbf{M}^n_t)_{t\in[0,1]}\}_{n\to\infty}$ converges in law to Gaussian
martingale $(\mathbf{M}_t)_{t\in[0,1]}$ with $\langle
\mathbf{M}\rangle_t=F(t)$. Notice also that the family
$
\{(\frac{1}{n^{\frac{1}{2}-\alpha}}\mathbf{M}_t)_{t\in[0,1]}\}_{n\to \infty}
$
is in a framework of Freidlin and Wentzell (1984). So, it obeys the large deviations principle with the rate
$
\frac{1}{n^{1-2\alpha}}
$
and the rate function

\begin{equation}\label{V}
I(u)=
  \frac{1}{2}\begin{cases}
    \int_0^T\dot{u}^2_tdF(t) , &{\substack{u_0=0\\du_t=\dot{u}_tdF(t)\\
                                                                      \int_0^T\dot{u}^2_tdF(t)<\infty}}
 \\
    \infty , & \text{otherwise}.
  \end{cases}
\end{equation}

\begin{theorem}\label{theo-A.1}
For any $\alpha\in\big(0,\frac{1}{2}\big)$ and any $T$ in a small vicinity of $\{1\}$, the
families
$$
\Big\{\Big(\frac{1}{n^{\frac{1}{2}-\alpha}}\mathbf{M}_t\Big)_{t\in[0,T]}\Big\}_{n\to\infty}
\quad\text{and}\quad
\Big\{\Big(\frac{1}{n^{\frac{1}{2}-\alpha}}\mathbf{M}^n_t\Big)_{t\in[0,T]}\Big\}_{n\to\infty}
$$
share the same large deviations principle.
\end{theorem}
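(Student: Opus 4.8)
Since the Freidlin--Wentzell family $\big\{\big(\frac{1}{n^{1/2-\alpha}}\mathbf{M}_t\big)_{t\in[0,T]}\big\}_{n\to\infty}$ already obeys the large deviations principle in $\mathbb{D}_{[0,T]}$ (uniform metric) at speed $\frac{1}{n^{1-2\alpha}}$ with the rate function $I$ of \eqref{V}, it suffices to show that $\big\{\big(\frac{1}{n^{1/2-\alpha}}\mathbf{M}^n_t\big)_{t\in[0,T]}\big\}_{n\to\infty}$ obeys the large deviations principle at the same speed with the \emph{same} $I$. I would not attempt to couple $\mathbf{M}^n$ with $\mathbf{M}$ (they live on different probability spaces); instead I would verify the principle for $\mathbf{M}^n$ directly, exploiting that
\[
\frac{1}{n^{1/2-\alpha}}\mathbf{M}^n_t=n^{\alpha}\cdot\frac1n\sum_{k=1}^{n}M^k_t,\qquad M^k_t=I_{\{\xi_k\le t\}}-A^k_t,
\]
is a moderate-deviations rescaling of the empirical mean of i.i.d. $\mathbb{D}_{[0,T]}$-valued martingales $M^k$, uniformly bounded on $[0,T]$ because $T<1$ forces $\sup_k\sup_{t\le T}|A^k_t|\le\log\frac1{1-F(T)}<\infty$. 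The route is the classical criterion ``exponential tightness $+$ large deviations for the finite-dimensional projections'' (see, e.g., \cite{DZ}).

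First, the finite-dimensional principle. Fix $0\le t_1<\dots<t_d\le T$ and $\theta\in\mathbb{R}^d$. Independence of the $M^k$ and a Taylor expansion of the cumulant of the bounded, mean-zero vector $(M^1_{t_1},\dots,M^1_{t_d})$ yield
\[
\frac{1}{n^{1-2\alpha}}\log\mathsf{E}\exp\Big(n^{1-2\alpha}\sum_{i=1}^{d}\theta_i\,\tfrac{1}{n^{1/2-\alpha}}\mathbf{M}^n_{t_i}\Big)=n^{2\alpha}\log\mathsf{E}\exp\Big(n^{-\alpha}\sum_{i=1}^{d}\theta_iM^1_{t_i}\Big)\xrightarrow[n\to\infty]{}\tfrac12\sum_{i,j=1}^{d}\theta_i\theta_j\,F(t_i\wedge t_j),
\]
because $\log\mathsf{E}\exp\big(n^{-\alpha}\sum_i\theta_iM^1_{t_i}\big)=\tfrac12 n^{-2\alpha}\var\big(\sum_i\theta_iM^1_{t_i}\big)+O(n^{-3\alpha})$ and $\cov(M^1_s,M^1_t)=\mathsf{E}(M^1_{s\wedge t})^2=\mathsf{E}A^1_{s\wedge t}=F(s\wedge t)$. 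By the G\"artner--Ellis theorem in $\mathbb{R}^d$, the vector $\big(\tfrac{1}{n^{1/2-\alpha}}\mathbf{M}^n_{t_1},\dots,\tfrac{1}{n^{1/2-\alpha}}\mathbf{M}^n_{t_d}\big)$ obeys the large deviations principle at speed $\frac{1}{n^{1-2\alpha}}$ with rate function the Legendre transform of the above limit --- exactly the pushforward of $I$ under the coordinate projection $u\mapsto(u_{t_1},\dots,u_{t_d})$, since $F(s\wedge t)$ is the covariance of $\mathbf{M}$ and $I$ in \eqref{V} is the associated reproducing-kernel-Hilbert-space square norm.

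Next, exponential tightness. I would prove that, for every $\delta>0$,
\[
\lim_{\eta\downarrow0}\varlimsup_{n\to\infty}\frac{1}{n^{1-2\alpha}}\log\mathsf{P}\Big(\sup_{|s-t|\le\eta,\ s,t\le T}\big|\tfrac{1}{n^{1/2-\alpha}}\mathbf{M}^n_s-\tfrac{1}{n^{1/2-\alpha}}\mathbf{M}^n_t\big|>\delta\Big)=-\infty,
\]
together with $\varlimsup_n\frac{1}{n^{1-2\alpha}}\log\mathsf{P}\big(\sup_{t\le T}\tfrac{1}{n^{1/2-\alpha}}|\mathbf{M}^n_t|>R\big)\to-\infty$ as $R\to\infty$, the latter being the exponential-supermartingale/stopping-time argument of Section~\ref{sec-4a} applied with the $n^{1/2-\alpha}$ normalization (parameter $\lambda$ of order $n^{1-\alpha}$). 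For the modulus estimate I would cut $[0,T]$ along the continuous clock $F$ into $\lceil1/\eta\rceil$ blocks on which $F$ grows by at most $\eta$, apply the exponential supermartingale $\mathfrak z_t$ of \eqref{ZZZ} blockwise with a union bound, and note that each jump of $\tfrac{1}{n^{1/2-\alpha}}\mathbf{M}^n$ has size $n^{\alpha-1}\to0$, so the jumps are exponentially negligible and the processes concentrate near $C_{[0,T]}$. Arzel\`a--Ascoli then furnishes, for each $L$, a compact $K_L\subset C_{[0,T]}$ with $\varlimsup_n\frac{1}{n^{1-2\alpha}}\log\mathsf{P}\big(\tfrac{1}{n^{1/2-\alpha}}\mathbf{M}^n\notin K_L\big)\le-L$.

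Finally, assembling: exponential tightness together with the finite-dimensional principle gives, by the Dawson--G\"artner projective-limit theorem and the standard upgrade of the topology from pointwise to uniform afforded by exponential tightness, that $\big\{\tfrac{1}{n^{1/2-\alpha}}\mathbf{M}^n\big\}_{n\to\infty}$ obeys a large deviations principle at speed $\frac{1}{n^{1-2\alpha}}$ with a good rate function whose finite-dimensional projections coincide with those of $I$; since $I$ equals the supremum of its finite-dimensional projections, this rate function is $I$ itself, and the two families share the same large deviations principle. The main obstacle is the exponential tightness --- specifically the uniform-in-$n$ exponential control of the modulus of continuity in the uniform (not merely Skorokhod $J_1$) metric: one must partition along the continuous clock $F$, apply the exponential inequality of Section~\ref{sec-4a} on each block, and keep the number of blocks small enough that the union bound does not spoil the $-\infty$ limit, while separately disposing of the vanishing jumps; the Taylor expansion of the first step and the bookkeeping of the last step are routine.
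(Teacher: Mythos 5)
Your plan is sound and, as far as I can check, every step goes through: the finite-dimensional cumulant computation is correct (note $M^1$ is indeed bounded on $[0,T]$ because $A^1_t\le\log\frac1{1-F(T)}$ for $T<1$, and $\cov(M^1_s,M^1_t)=F(s\wedge t)$), the blockwise exponential-supermartingale estimate with $\mu$ of order $n^{-\alpha}$ does produce a modulus bound of order $\exp\big(-n^{1-2\alpha}\,\delta^2(1-F(T))/(2\eta)\big)$ whose exponent diverges as $\eta\downarrow0$ while the $\lceil1/\eta\rceil$ union bound is harmless, and the Dawson--G\"artner assembly plus the identification of $I$ as the supremum of its finite-dimensional (RKHS) projections is standard. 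However, this is a genuinely different route from the paper's. The paper invokes Puhalskii's ``method of stochastic exponentials'': it writes down the exponential compensators $\widetilde{\mathscr{E}}^{n,\alpha}_t(\lambda)$ and $\mathscr{E}^{n,\alpha}_t(\lambda)$ of the two families and reduces the equality of the two LDPs to the single superexponential-negligibility condition \eqref{ojoj} on $\log\big(\mathscr{E}^{n,\alpha}_t(\lambda)/\widetilde{\mathscr{E}}^{n,\alpha}_t(\lambda)\big)$; using $d\mathbf{A}^n_s=n\frac{1-F_n(s)}{1-F(s)}dF(s)$ this difference is controlled by $\frac{1}{n^{1-\alpha}}+\sup_{s\le T}|F_n(s)-F(s)|$, and the required superexponential decay is exactly what the Kolmogorov bound of Section~\ref{sec-4a} delivers at the slower speed $n^{1-2\alpha}$. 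So the paper's proof is short, assumes the LDP for the Gaussian family as a black box (Freidlin--Wentzell), and leans on the paper's own main estimate --- which is the structural point of the article; your proof is longer and must independently establish exponential tightness, but it is self-contained, does not need the Kolmogorov bound at all, and proves the LDP for $\frac{1}{n^{1/2-\alpha}}\mathbf{M}^n$ from scratch rather than transferring it from the Gaussian limit. Both are valid; if you write yours up, the only place demanding real care is the one you already flagged, namely the uniform-metric modulus estimate and the disposal of the $O(n^{\alpha-1})$ jumps.
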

\begin{proof}
Hereafter, $\lambda(s)$ is a bounded measurable
function.

Since
$
\frac{1}{n^{\frac{1}{2}-\alpha}}\int_0^t\lambda(s)d\mathbf{M}_s
$
is a continuous Gaussian martingale with the predictable variation process
$
\frac{1}{n^{1-2\alpha}}\int_0^t\lambda^2(s)dF(s)=:2\widetilde{\mathscr{E}}^{n,\alpha}_t(\lambda),
$
the function
$
\widetilde{\mathscr{E}}^{n,\alpha}_t(\lambda)
$
is the Laplace transform of
$
\frac{1}{n^{\frac{1}{2}-\alpha}}\int_0^t\lambda(s)d\mathbf{M}_s.
$
Moreover,
a random process
$
\widetilde{\mathfrak{z}}^{n,\alpha}_t=\exp\Big(\frac{1}{n^{\frac{1}{2}-\alpha}}\int_0^t\lambda(s)
d\mathbf{M}_s-\log \widetilde{\mathscr{E}}^{n,\alpha}_t(\lambda)\Big)
$
is a martingale. In the case of
$
\frac{1}{n^{\frac{1}{2}-\alpha}}\int_0^t\lambda(s)d\mathbf{M}^n_s,
$
an explicit formula for the Laplace transform is unknown. However, a random process
$
\mathscr{E}^{n,\alpha}_t(\lambda)=\exp\Big(\int_0^t\big[e^{\frac{\lambda(s)}{n^{1-\alpha}}}-1-
\frac{\lambda(s)}{n^{1-\alpha}}\big]d\mathbf{A}^n_s\Big)
$
``exponentially compensates''
$
\frac{1}{n^{\frac{1}{2}-\alpha}}\int_0^t\lambda(s)d\mathbf{M}^n_s
$
up to a martingale in a sense that a random process
$
\mathfrak{z}^{n,\alpha}_t=\exp\Big(\frac{1}{n^{\frac{1}{2}-\alpha}}\int_0^t\lambda(s)
d\mathbf{M}^n_s
-\mathscr{E}^{n,\alpha}_t(\lambda)\Big)
$
is a local martingale (the latter is verified by applying the It\^o formula).

By a terminology of Puhalskii (1994, 2001), $\widetilde{\mathscr{E}}^{n,\alpha}_t(\lambda)$
and $\mathscr{E}^{n,\alpha}_t(\lambda)$ are referred to as ``Stochastic Exponentials'' related to
the families
$$
\Big\{\Big(\frac{1}{n^{\frac{1}{2}-\alpha}}\mathbf{M}_t\Big)_{t\in[0,T]}
\Big\}_{n\to\infty}\quad\text{and}\quad
\Big\{\Big(\frac{1}{n^{\frac{1}{2}-\alpha}}\mathbf{M}^n_t\Big)_{t\in[0,T]}\Big\}_{n\to\infty}
$$
respectively.

A role of stochastic exponential is revealed in
Puhalskii (1994, 2001). In our setting the Puhalskii result states that
the above-mentioned families share the same large deviations principle
provided that for any $\eta>0$
and any bounded $\lambda(t)$,
\begin{equation}\label{ojoj}
\lim_{n\to
0}\frac{1}{n^{1-2\alpha}}\log\mathsf{P}\bigg(\sup_{t\in[0,T]}
n^{1-2\alpha}\Big|\log\frac{\mathscr{E}^{n,\alpha}_t(\lambda)}{
\widetilde{\mathscr{E}}^{n,\alpha}_t(\lambda)}\Big|>\eta\bigg)=-\infty.
\end{equation}

We finish the proof by verification of \eqref{ojoj}.
Taking into account
$$
d\mathbf{A}^n_s=n\frac{1-F_n(s)}{1-F(s)}dF(s)
$$
(see \eqref{eq:AM}), write
\begin{gather*}
n^{1-2\alpha}\Big|\log\frac{\mathscr{E}^{n,\alpha}_t(\lambda)}{
\widetilde{\mathscr{E}}^{n,\alpha}_t(\lambda)}\Big|
\\
=n^{1-2\alpha}\Big|\int_0^t\Big[e^{\frac{\lambda(s)}{n^{1-\alpha}}}-1-
\frac{\lambda(s)}{n^{1-\alpha}}\Big]
d\mathbf{A}^n_s-\int_0^t\frac{\lambda^2(s)}{2n^{1-2\alpha}}dF(s)\Big|
\\
=
n^{1-2\alpha}\Big|\int_0^t\Big[e^{\frac{\lambda(s)}{n^{1-\alpha}}}-1-
\frac{\lambda(s)}{n^{1-\alpha}}-\frac{\lambda^2(s)}{2n^{2(1-\alpha)}}\Big]
n\frac{1-F_n(s)}{1-F(s)}dF(s)
\\
-\int_0^t\Big[\frac{\lambda^2(s)}{2n^{2(1-\alpha)}}n\frac{1-F_n(s)}{1-F(s)}
-\frac{\lambda^2(s)}{2n^{1-2\alpha}}\Big]dF(s)\Big|.
\end{gather*}
Since
\begin{gather*}
\int_0^Tn^{1-2\alpha}\Big|e^{\frac{\lambda(s)}{n^{1-\alpha}}}-1-
\frac{\lambda(s)}{n^{1-\alpha}}-\frac{\lambda^2(s)}{2n^{2(1-\alpha)}}\Big|
n\frac{1-F_n(s)}{1-F(s)}dF(s)
\\
\le \text{const.}\frac{1}{n^{1-\alpha}}\int_0^T\frac{1}{1-F(s)}dF(s)
=\text{const.}\frac{1}{n^{1-\alpha}}\log\frac{1}{1-F(T)}
\end{gather*}
and
\begin{align*}
&
\int_0^Tn^{1-2\alpha}\Big|\frac{\lambda^2(s)}{2n^{2(1-\alpha)}}n\frac{1-F_n(s)}{1-F(s)}
-\frac{\lambda^2(s)}{2n^{1-2\alpha}}\Big|dF(s)
\\
&\le \int_0^T\frac{\lambda^2(s)}{2}\frac{|F_n(s)-F(s)|}{1-F(s)}dF(s)
\\
&\le\text{const.}\sup_{s\in[0,T]}|F_n(s)-F(s)|\int_0^T\frac{1}{1-F(s)}dF(s)
\\
&=\text{const.}\sup_{s\in[0,T]}|F_n(s)-F(s)|\log\frac{1}{1-F(T)},
\end{align*}
we shall analyze an upper bound of the following inequality:
\begin{multline*}
n^{1-2\alpha}\Big|\sup_{t\in[0,T]}\log\frac{\mathscr{E}^{n,\alpha}_t(\lambda)}{
\widetilde{\mathscr{E}}^{n,\alpha}_t(\lambda)}\Big|
\\
\le
\text{const.}\log\frac{1}{1-F(T)}\Big[\frac{1}{n^{1-\alpha}}+\sup_{s\in[0,T]}|F_n(s)-F(s)|\Big].
\end{multline*}
Obviously, \eqref{ojoj} is valid if
$$
\lim\limits_{n\to\infty}\frac{1}{n^{1-2\alpha}}\mathsf{P}\big(\sup_{s\in[0,T]}|F_n(s)
-F(s)|>\eta-\frac{1}{n^{1-\alpha}}\big)=-\infty.
$$

For fixed $\eta$, let us choose a number $n_0$ such that
$\frac{1}{n_0^{1-\alpha}}\le \frac{\eta}{2}$ and all $n\ge n_0$. In this scenario it remains to show that
\begin{equation*}
\lim_{n\to\infty}\frac{1}{n^{1-2\alpha}}\mathsf{P}\Big(\sup_{s\in[0,T]}|F_n(s)
-F(s)|>\frac{\eta}{2}\Big)=-\infty.
\end{equation*}
The latter heavily uses Kolmogorov's bound:
\begin{multline*}\
\frac{1}{n^{1-2\alpha}}\log\mathsf{P}\bigg(\sup_{s\in[0,T]}|F_n(s)-F(s)|\ge
\frac{\eta}{2}\bigg)\\
\le
\frac{\log 2}{n^{1-2\alpha}}
-n^{2\alpha}\Big[\frac{\eta}{16}\Big\{\log\Big(1+\frac{\eta^2}{128}\Big)-1\Big\}
+\frac{8}{\eta}\log\Big(1+\frac{\eta^2}{128}\Big)\Big]\bigg)
\xrightarrow[n\to\infty]{}-\infty.
\end{multline*}
\end{proof}

Theorem \ref{theo-A.1} implies the following result.

\begin{theorem}\label{theo-4.2x}
For any $\alpha\in\big(0,\frac{1}{2}\big)$ and any $T$ in a small vicinity of $\{1\}$,
the
family $\{(X^{n,\alpha}_t)_{t\in[0,T]}\}_{n\to\infty}$ obeys the large deviations principle
in the Skorokhod space $\mathbb{D}_{[0,T]}$ relative Skorokhod's
and uniform metrics with the rate speed
$\frac{1}{n^{1-2\alpha}}$ and the rate function
\begin{equation*}
J_T(u)=
  \frac{1}{2}\begin{cases}
    \displaystyle{\int_0^T}\Big(\dot{u}_t+\frac{u_t}{1-F(t)}\Big)^2dt , &{\substack{u_0=0\\du_t=
                                                                      \dot{u}_tdF(t)\\
                                                                      \int_0^T(\dot{u}_t+\frac{u_t}{1-F(t)}
                                                                      )^2dF(t)<\infty}}
 \\
    \infty , & \text{otherwise}.
  \end{cases}
\end{equation*}
\end{theorem}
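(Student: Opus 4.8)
The plan is to obtain the large deviations principle for $X^{n,\alpha}$ by pushing, through the contraction principle, the large deviations principle already available for the driving martingales. Two ingredients are at hand. First, by Theorem \ref{theo-A.1} the family $\{(\frac{1}{n^{\frac12-\alpha}}\mathbf{M}^{n}_t)_{t\in[0,T]}\}_{n\to\infty}$ obeys, in the uniform metric on $\mathbb{D}_{[0,T]}$ (hence also in the Skorokhod metric), the large deviations principle with speed $\frac{1}{n^{1-2\alpha}}$ and the Freidlin--Wentzell rate function $I(\cdot)$ of \eqref{V}. Second, by Lemma \ref{lem-2.1}{\bf (iv)}, $X^{n,\alpha}_t=\mathsf{\Psi}\big(\frac{1}{n^{\frac12-\alpha}}\mathbf{M}^{n}_{[0,t]}\big)$, where $\mathsf{\Psi}:\mathbb{D}_{[0,T]}\to\mathbb{D}_{[0,T]}$ is Lipschitz (with constant $2$) in the uniform metric, in particular continuous.

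First I would invoke the contraction principle (see, e.g., \cite{DZ}): since $X^{n,\alpha}$ is the image under the continuous map $\mathsf{\Psi}$ of $\frac{1}{n^{\frac12-\alpha}}\mathbf{M}^{n}$, the family $\{(X^{n,\alpha}_t)_{t\in[0,T]}\}_{n\to\infty}$ obeys the large deviations principle with speed $\frac{1}{n^{1-2\alpha}}$ and rate function
\[
J_T(u)=\inf\{I(x):\ \mathsf{\Psi}(x)=u\},\qquad \inf\varnothing=\infty .
\]
Because uniform convergence dominates Skorokhod convergence, the identity map from the uniform to the Skorokhod topology is continuous, so this LDP descends to the Skorokhod metric as well; thus it holds in both metrics, as asserted.

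The only genuine computation is the evaluation of $J_T$, and here the key observation is that $\mathsf{\Psi}$ carries the effective domain of $I$ bijectively onto the effective domain of $J_T$, preserving functional values, so that the infimum above is attained at a single point. By Lemma \ref{lem-2.1}{\bf (i)}--{\bf (iii)}, $u=\mathsf{\Psi}(x)$ is equivalent to the integral equation $u_t=-\int_0^t\frac{u_s}{1-F(s)}dF(s)+x_t$, whose unique inversion is $x_t=u_t+\int_0^t\frac{u_s}{1-F(s)}dF(s)$. If $x\in\mathrm{dom}(I)$, i.e. $x_0=0$, $dx_t=\dot{x}_t\,dF(t)$, $\int_0^T\dot{x}_t^2\,dF(t)<\infty$, then (using $F(T)<1$, so $x$ is bounded and continuous) setting $g_t=\int_0^t\frac{x_s}{[1-F(s)]^2}dF(s)$ and applying the Stieltjes product rule $d\big([1-F(t)]g_t\big)=[1-F(t)]\,dg_t-g_t\,dF(t)$ (valid since $F$ is continuous) one finds that $u=\mathsf{\Psi}(x)=x-[1-F]g$ is again $F$-absolutely continuous and that $\dot{u}_t+\frac{u_t}{1-F(t)}=\dot{x}_t$; conversely, from $u\in\mathrm{dom}(J_T)$ the formula $x_t=u_t+\int_0^t\frac{u_s}{1-F(s)}dF(s)$ gives $\dot{x}_t=\dot{u}_t+\frac{u_t}{1-F(t)}$ and the same identity. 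Hence, for $u\in\mathrm{dom}(J_T)$,
\[
J_T(u)=I\big(\mathsf{\Psi}^{-1}(u)\big)=\frac12\int_0^T\Big(\dot{u}_t+\frac{u_t}{1-F(t)}\Big)^2dF(t),
\]
and $J_T(u)=\infty$ otherwise --- precisely the stated rate function.

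The main obstacle I expect is not conceptual but careful bookkeeping in this last step: verifying that the change of variables $x\leftrightarrow u$ respects $F$-absolute continuity in both directions and that the Radon--Nikodym derivatives transform as claimed, given only that $F$ is continuous (not necessarily absolutely continuous with respect to Lebesgue measure); the integration-by-parts identity for $[1-F(t)]g_t$ is the crux. A secondary point worth a sentence is the simultaneous validity of the LDP in the Skorokhod and uniform metrics, which is automatic here because the effective domains consist of continuous paths and $\mathsf{\Psi}$ is uniformly Lipschitz, so the LDP proven in the finer (uniform) topology projects down to the coarser (Skorokhod) one.
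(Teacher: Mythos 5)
Your proposal is correct and follows essentially the same route as the paper: represent $X^{n,\alpha}=\mathsf{\Psi}\big(\frac{1}{n^{1/2-\alpha}}\mathbf{M}^n_{[0,\cdot]}\big)$ via Lemma \ref{lem-2.1}{\bf (iv)}, transfer the LDP from $\frac{1}{n^{1/2-\alpha}}\mathbf{M}$ to $\frac{1}{n^{1/2-\alpha}}\mathbf{M}^n$ by Theorem \ref{theo-A.1}, and apply the contraction principle. In fact you carry out the inversion $x_t=u_t+\int_0^t\frac{u_s}{1-F(s)}dF(s)$ explicitly and so identify $J_T(u)=I(\mathsf{\Psi}^{-1}(u))=\frac12\int_0^T(\dot u_t+\frac{u_t}{1-F(t)})^2dF(t)$ more carefully than the paper's one-line statement, which writes the composition in the opposite direction.
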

\begin{proof}
By Lemma \ref{lem-2.1},
$
X^{n,\alpha}_t=\mathsf{\Psi}\Big(\frac{1}{n^{\frac{1}{2}-\alpha}}\mathbf{M}^{n}_{[0,t]}\Big).
$
Hence and by Theorem \ref{theo-A.1} the
family $\{(X^{n,\alpha}_t)_{t\in[0,T]}\}_{n\to\infty}$ shares the large deviations principle with
the
family $\mathsf{\Psi}\Big(\frac{1}{n^{\frac{1}{2}-\alpha}}\mathbf{M}_{[0,t]}\Big)_{t\in[0,T]}$.

Hence, by the contraction principle of Varadhan (1984) and \eqref{V}
$
J_T(u)=I(v)_{v=\mathsf{\Psi}(u)}$.
\end{proof}

\end{document}